\tikzset{dummy/.style= {circle,fill,draw,inner sep=0pt,minimum size=1.2mm}}
\tikzset{vertex/.style={fill, circle, minimum size=.1cm, inner sep=0pt}}
\numberwithin{equation}{section} 
\numberwithin{figure}{section}
\newcommand{\newrefformat}[2]{}
\newcommand\restr[2]{{
  \left.\kern-\nulldelimiterspace 
  #1 
  \vphantom{\big|} 
  \right|_{#2} 
  }}
\crefname{lemma}{Lemma}{Lemmas}
\crefname{theorem}{Theorem}{Theorems}
\crefname{definition}{Definition}{Definitions}
\crefname{proposition}{Proposition}{Propositions}
\crefname{remark}{Remark}{Remarks}
\crefname{corollary}{Corollary}{Corollaries}
\crefname{equation}{Equation}{Equations}
\crefname{construction}{Construction}{Constructions}
\crefname{ex}{Example}{Examples}
\crefname{appsec}{Appendix}{Appendices}
\crefname{subsection}{Subsection}{Subsections}
\theoremstyle{plain}
\newtheorem{theorem}[equation]{Theorem}
\newtheorem{corollary}[equation]{Corollary}
\newtheorem{proposition}[equation]{Proposition}
\newtheorem{lemma}[equation]{Lemma}
\newtheorem*{theorem*}{Theorem}
\theoremstyle{definition}
\newtheorem{definition}[equation]{Definition}
\newtheorem{example}[equation]{Example}
\newtheorem{remark}[equation]{Remark}
\newcommand{\EE}{\mathbb{E}}
\newcommand{\cO}{\mathcal{O}}
\newcommand{\cat}[1]{\mathscr{#1}}
\newcommand{\Loop}{\Omega}
\newcommand{\op}{\operatorname{op}}
\newcommand{\ob}{\operatorname{Ob}}
\renewcommand{\hom}{\operatorname{Hom}}
\newcommand{\id}{\operatorname{id}}
\newcommand{\cof}{\rightarrowtail}
\newcommand{\Sp}{\mathrm{Sp}}
\newcommand{\Wald}{\mathrm{Wald}}
\newcommand{\Perm}{\mathrm{Perm}}
\newcommand{\Sym}{\mathrm{Sym}}
\newcommand{\LaxSym}{\mathrm{LaxSym}}
\newcommand{\Null}{\mathrm{Null}}
\newcommand{\Span}{\mathrm{Span}}
\newcommand{\Fin}{\mathrm{Fin}}
\author[M. E. Calle]{Maxine E. Calle}             
\address{Department of Mathematics,
         University of Pennsylvania,
         Philadelphia, PA, 19104,
         USA}
\email{callem@sas.upenn.edu}
\author[D. Chan]{David Chan}
\address{Department of Mathematics,
         Michigan State University,
         East Lansing, MI, 48824
         USA}
\email{chandav2@msu.edu}
\keywords{Algebraic K-theory, inverse K-theory, Waldhausen categories, connective spectra}
\subjclass[2020]{
19D23, 
18F25, 
55P42
}
\title{Segal $K$-theory factors through Waldhausen categories}
\date{}
\begin{document}

\maketitle
\vspace{-8mm}
\begin{abstract}
    We show that Segal's $K$-theory of symmetric monoidal categories can be factored through Waldhausen categories. In particular, given a symmetric monoidal category $\cat{C}$, we produce a Waldhausen category $\Gamma(\cat{C})$ whose $K$-theory is weakly equivalent to the Segal $K$-theory of $\cat{C}$. As a consequence, we show that every connective spectrum may be obtained via Waldhausen $K$-theory.
\end{abstract}

\section{Introduction}
Pioneering work of Quillen, Segal, and Waldhausen produced three distinct approaches to higher algebraic $K$-theory \cite{bass_higher_1973,segal_categories_1974,ranicki_algebraic_1985}. Their constructions, which we denote by $K^Q$, $K^{S}$, and $K^W$, respectively, each have their own advantages and play important roles in the foundations of algebraic $K$-theory, with applications to a variety of fields including number theory, algebra, algebraic geometry, and algebraic topology.  
Given that there are several possible constructions of algebraic $K$-theory, each of which takes in a different kind of categorical input, it is convenient to know how to compare them. A theorem of Waldhausen shows that every exact category $\cat{C}$ admits a Waldhausen structure so that the resulting spectra $K^Q(\cat{C})$ and $K^W(\cat{C})$ are equivalent \cite[\S 1.9]{ranicki_algebraic_1985}, but the functors $K^S$ and $K^W$ are not so evidently comparable. While it is true that every Waldhausen category $\cat C$ has an underlying symmetric monoidal category $w\cat C$ after making a choice of coproducts, it is not true in general that $K^W(\cat{C})$ is equivalent to $K^S(w\cat{C})$. Moreover, not every symmetric monoidal category admits a Waldhausen structure; in particular, the underlying symmetric monoidal structure on a Waldhausen category is given by the categorical coproduct.

The goal of this paper is to produce an explicit comparison of Segal and Waldhausen $K$-theory, and our main theorem is the following.

\begin{theorem}\label{theorem: main theorem intro}
    There is a functor $\Gamma$ from symmetric monoidal categories to Waldhausen categories so that for any symmetric monoidal category $\cat{C}$ there is a zig-zag of natural weak equivalences $K^S(\cat{C})\xleftrightarrow{\sim} K^W(\Gamma(\cat{C}))$.
\end{theorem}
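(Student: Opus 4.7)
The plan is to define $\Gamma(\cat{C})$ as a category of finite sequences of objects of $\cat{C}$, equipped with a Waldhausen structure tailored to recover Segal's construction. I would take the objects of $\Gamma(\cat{C})$ to be finite (possibly empty) sequences $(c_1, \ldots, c_n)$ of objects of $\cat{C}$; a morphism $(c_1, \ldots, c_n) \to (d_1, \ldots, d_m)$ consists of an injection $\phi \colon \{1, \ldots, n\} \hookrightarrow \{1, \ldots, m\}$ together with, for each $i$, a morphism $c_i \to d_{\phi(i)}$ in $\cat{C}$. The cofibrations are those morphisms lying over injections with identity components (inclusions of subsequences), and the weak equivalences are generated by levelwise isomorphisms together with the collapse maps $(c_1, \ldots, c_n) \to (c_1 \otimes \cdots \otimes c_n)$ that identify a sequence with its monoidal product in $\cat{C}$. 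Concatenation gives a symmetric monoidal structure which is the coproduct in $\Gamma(\cat{C})$, and the empty sequence is the zero object.

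With this setup I would first verify the Waldhausen axioms. The zero object, pushouts along cofibrations (computed as concatenations with complementary subsequences), and the gluing lemma are largely formal once one checks that the collapse maps interact well with concatenation. Crucially, every cofibration admits a canonical splitting and every cofiber sequence is split. Naturality of $\Gamma$ in symmetric monoidal functors $\cat{C} \to \cat{D}$ is immediate from the construction.

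To compare $K^S(\cat{C})$ with $K^W(\Gamma(\cat{C}))$, I would proceed in two steps. Because cofiber sequences in $\Gamma(\cat{C})$ are split, Waldhausen's additivity theorem identifies $K^W(\Gamma(\cat{C}))$ with the Segal $K$-theory of the underlying symmetric monoidal category $(\Gamma(\cat{C}), \sqcup)$ equipped with its chosen weak equivalences. The inclusion $\cat{C} \hookrightarrow \Gamma(\cat{C})$ as length-one sequences is symmetric monoidal up to the collapse weak equivalences, and I would show that it induces a levelwise weak equivalence of associated Segal $\Gamma$-spaces; the homotopy inverse at level $k$ collapses each coordinate of a tuple in $\Gamma(\cat{C})^k$ to its monoidal product in $\cat{C}$, producing a tuple in $\cat{C}^k$.

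The main obstacle will be calibrating the weak equivalences. They must be small enough that the Waldhausen axioms hold and pushouts are homotopically well behaved, yet large enough that the collapse morphisms are weak equivalences so that the inclusion of $\cat{C}$ is Segal-theoretically dense. Choosing the right generating class and verifying the gluing axiom in the presence of the collapse maps is where the technical work lies; once this is done, the $K$-theoretic comparison reduces to additivity together with a direct simplicial comparison of $\Gamma$-spaces.
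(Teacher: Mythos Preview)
Your morphism definition is too small to support the Waldhausen structure you want. With only injections $\phi\colon[n]\hookrightarrow[m]$ indexing a morphism, there is no map from any nonempty tuple to the empty one, so $()$ is initial but not terminal and $\Gamma(\cat{C})$ has no zero object. More seriously, the collapse maps you need as weak equivalences are not morphisms in your category at all: a map $(c_1,\ldots,c_n)\to(c_1\otimes\cdots\otimes c_n)$ would require an injection $[n]\hookrightarrow[1]$, which does not exist for $n>1$. So the weak equivalences you want to ``generate'' are not even arrows of $\Gamma(\cat{C})$, and there is nothing for the gluing axiom or the Segal comparison to act on. Formally adjoining them forces you to specify how a collapse composes with an injection-indexed map, and once you do that carefully you are led to a substantially richer morphism set.

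The paper's fix is precisely this enrichment: a morphism $A\to B$ is indexed by a function $\phi\colon[a]\to 2^{[b]}$ into the \emph{power set}, together with maps $f_i\colon A_i\to T^{|\phi(i)|}(B_{\phi(i)})$. Allowing $\phi(i)=\varnothing$ makes $()$ a genuine zero object, and allowing $\phi(1)=[b]$ produces the collapse-type weak equivalence $(T^b(B_1,\ldots,B_b))\to(B_1,\ldots,B_b)$ (note the direction is opposite to yours). Weak equivalences are then those $(\phi,f)$ for which the $\phi(i)$ partition $[b]$, and cofibrations are those for which $\phi$ comes from an injection and each $f_i$ is an isomorphism---matching your intuition for cofibrations. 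With this in place the rest of your outline is essentially the paper's: split cofibrations plus the Bohmann--Osorno/Waldhausen comparison reduce $K^W(\Gamma(\cat{C}))$ to $K^S(w\Gamma(\cat{C}))$, and the inclusion $\cat{C}\hookrightarrow w\Gamma(\cat{C})$ is shown to be a $K^S$-equivalence via Quillen's Theorem~A (after a small detour through $\cat{C}_+$ to handle units).
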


The construction of the Waldhausen category $\Gamma(\cat{C})$ is described at the beginning of \cref{section:main}. The underlying category of $\Gamma(\cat{C})$ is the Grothendieck construction on a psuedofunctor from spans of finite sets to categories that is determined by the symmetric monoidal category $\cat C$.  The Waldhausen structure on $\Gamma(\cat{C})$ is essentially lifted from one on the span category of finite sets.

As an application, we obtain an ``inverse $K$-theory functor'' for $K^W$, building on work of Thomason \cite{thomason_symmetric_1995} (see also \cite{mandell_inverse_2010}), who proved the corresponding result for $K^S$. 
In the following result, $\cat N(\Loop^{\infty} E)$ denotes a certain Waldhausen category of ``homotopically discrete retractive spaces'' over the infinite loop space $\Loop^{\infty}E$ (see \cref{definition: N(X)}).

\begin{theorem}\label{introthm:inverse K}
    For every connective spectrum $E$, there is a natural zig-zag of stable equivalences between $E$ and $K^W(\cat N(\Loop^{\infty} E))$. 
\end{theorem}
This theorem implies that the homotopy category of connective spectra is a retract of the localization of Waldhausen categories at those exact functors which induce stable equivalences of $K$-theory spectra. 

\begin{remark}
    Recent constructions of combinatorial $K$-theory spectra often have Waldhausen categories as special cases, particularly for \textit{squares $K$-theory} \cite{ckmz:squares, cs:2segal}. A consequence of \cref{theorem: main theorem intro} is that for any symmetric monoidal category $\cat C$ there is a natural squares structure on $\Gamma(\cat C)$, via its Waldhausen structure, so that $K^{\square}(\Gamma(\cat C))$ is equivalent to $K^S(\cat C)$. Similarly, \cref{introthm:inverse K} implies that every connective spectrum may be realized via squares $K$-theory.
\end{remark}

\subsection{Related work} 
Since Thomason's original work, the inverse $K$-theory for symmetric monoidal categories has also been considered in work of Mandell and Johnson--Yau \cite{mandell_inverse_2010,JohnsonYauInverse} and extended to multicategories in work of Johnson--Yau \cite{JohnsonYauMulti}. A version of Thomason's theorem for genuine equivariant spectra is proved by the authors and P\'eroux in \cite{CalleChanPeroux}.  Johnson--Osorno show that all $1$-truncated spectra can be modeled by Picard categories \cite{JohnsonOsorno}, and this was extended by Moser--Ozornova--Paoli--Sarazola--Verdugo to show that $n$-truncated spectra are modeled by weak $n$-groupoids \cite{MoserOzornovaPaoliSarazolaVerdugo}.  Additionally, recent work of Ramzi--Sosnilo--Winges shows that every spectrum (possibly non-connective) arises as the (non-connective) $K$-theory of a stable $\infty$-category \cite{ramzisosnilowinges}.
Since the non-connective $K$-theory of stable $\infty$-categories does not factor through Waldhausen categories, it does not seem possible to directly deduce \cref{introthm:inverse K} from this result or Thomason's original work.

\vspace{.5cm}

\subsection{Acknowledgments}
The authors thank Teena Gerhardt, Mona Merling, Shaul Ragimov, Chase Vogeli, and Lucas Williams for helpful conversations, as well as Liam Keenan, Maximilien P\'eroux, and Maru Sarazola for feedback on early drafts of the paper. Additionally, the authors are very grateful to George Raptis, Melissa Wei, and Donald Yau for insightful observations which contributed to our understanding of the category $\Gamma(\cat C)$ as a Grothendieck construction.The authors would also like to thank anonymous referee for their comments which greatly improved the paper.

The first author was partially supported by NSF grant DGE-1845298, and the second author was supported by NSF grant DMS-2135960.
The authors would also like to thank the Isaac Newton Institute for Mathematical Sciences, Cambridge, for support and hospitality during the programme ``Equivariant homotopy theory in context" where some work on this paper was undertaken. This work was supported by EPSRC grant no EP/Z000580/1.

\section{Preliminaries} 

In this section we gather together background on Segal and Waldhausen's constructions of algebraic $K$-theory.  We assume the reader is familiar with symmetric monoidal categories, and write $\Sym$ for the category of symmetric monoidal categories and strong symmetric monoidal functors.

\subsection{Segal $K$-theory}

Segal's group completion $K$-theory functor \cite{segal_categories_1974} $K^{S}\colon \Sym\to \Sp$ produces a connective spectrum from the data of a symmetric monoidal category. The underlying infinite loop space of $K^S(\cat C)$ is the group completion of the classifying space $B\cat C$, which may be modeled as $\Loop \mathbf{B}(B\cat C)$, where $\mathbf{B}(B\cat C)$ is the bar construction on the $\mathbb{E}_{\infty}$-algebra $B\cat C$. We begin with a lemma which produces zig-zags of morphisms from \emph{non-unital} strong symmetric monoidal functors.

\begin{lemma}
    A non-unital strong symmetric monoidal functor $F\colon \cat C\to \cat D$ induces a natural zigzag $K(\cat{C})\xleftarrow{\sim} \bullet\rightarrow K(\cat{D})$ where both arrows are maps of grouplike $E_{\infty}$-spaces and the left arrow is a weak equivalence.
\end{lemma}\begin{proof}
    The forgetful functor from unital to non-unital $\Bbb{E}_\infty$-spaces has a left adjoint $(-)^+$ which freely adds a unit. We may then define the group completion of a non-unital $\Bbb{E}_\infty$-space $X$ to be the group completion of $X^+$. If $X$ is already unital, then there is a natural equivalence $(X^+)^{\rm gp}\xrightarrow{\sim} X^{\rm gp}$, obtained from the forgetful-free adjunction and the universal property of group completion. Now, a non-unital symmetric monoidal functor $F\colon \cat C\to \cat D$ induces a map of \textit{non-unital} $E_\infty$-spaces $B\cat C\to B\cat D$ and therefore a map $(B\cat C^+)^{\rm gp}\to (B\cat D^+)^{\rm gp}$ on their group completions. We thus obtain a zig-zag $K(\cat C)\xleftarrow{\sim} (B\cat C^+)^{\rm gp}\to (B\cat D^+)^{\rm gp}\xrightarrow{\sim} K(\cat D)$ as claimed.
\end{proof}

We will find it convenient to use \textit{permutative categories} (also called a strict symmetric monoidal categories) rather than symmetric monoidal categories. Recall that 
a permutative category is a symmetric monoidal category for which the monoidal structure is strictly unital and strictly associative (see, e.g. \cite[Definition 4.1]{segal_e_1974} for a complete definition), and we write $\Perm$ for the category of small permutative categories and strictly unital strong symmetric monoidal functors.


Every symmetric monoidal category can be \emph{strictified} to an equivalent permutative category (\cite[XI.3 Theorem 1]{MacLane}, \cite[Proposition 4.2]{segal_e_1974}). Thus, virtually any construction which can be performed on permutative categories can also be applied to an arbitrary symmetric monoidal category by first applying strictification.

\begin{definition}
    Let $\cat C$ be a permutative category. Define a new permutative category $\cat C_+$ with $\ob \cat C_+ = \ob \cat C \amalg \{*\}$ and $\hom\cat C_+ = \hom\cat C \amalg \{\id_*\}$. The permutative structure on $\cat C_+$ is given by the permutative structure on $\cat C$ but with $*$ behaving as the new (strict) unit, meaning $-\otimes * = \id = *\otimes -$.  
\end{definition}

Note that there is a homeomorphism $B(\cat C_+)\cong (B\cat C)_+$. Although the classifying spaces $B\cat{C}$ and $B(\cat{C}_+)$ are not equivalent, the group completions are.

\begin{lemma}[{\cite[Equation 1.6.3]{thomason_symmetric_1995}, see also \cite[Lemma A.1]{thomason_first_1982}}]\label{lemma: non unital equivalence}
    The (non-unital) strong symmetric monoidal inclusion $\cat C \to\cat C_+$ induces a zig-zag of weak equivalences $K^S(\cat C)\xleftrightarrow{\sim} K^S(\cat C_+)$.
\end{lemma}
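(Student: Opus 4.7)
The plan is to exhibit the map $K^{S}(\cat C) \to K^{S}(\cat C_+)$ as the canonical equivalence arising from group-completing the addition of a strict unit to an $E_\infty$-space.

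First I would translate the statement into Segal's model: by his construction, $\Omega^{\infty} K^{S}(\cat C) \simeq \Omega B(B\cat C)$ where $B\cat C$ carries the topological monoid structure coming from the permutative product, and similarly $\Omega^{\infty} K^{S}(\cat C_+) \simeq \Omega B((B\cat C)_+)$ using the homeomorphism $B\cat C_+ \cong (B\cat C)_+$. Under these identifications, the non-unital inclusion $\cat C \hookrightarrow \cat C_+$ becomes the standard map $B\cat C \hookrightarrow B\cat C \sqcup \{*\}$, with $*$ the new strict unit of the extended monoid.

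Next I would verify that adjoining a strict unit induces an equivalence on group completions. On $\pi_{0}$, the Grothendieck group of $\pi_{0}(B\cat C_+) = \pi_{0}(B\cat C) \sqcup \{[*]\}$, with $[*]$ serving as the identity, is canonically isomorphic to the Grothendieck group of $\pi_{0}(B\cat C)$. For higher homotopy, I would apply the group completion theorem: in the bar construction for $(B\cat C)_+$, any simplex containing $*$ in some coordinate is identified, via the strict unit relation, with a lower-dimensional simplex of the bar construction for $B\cat C$, so the inclusion is a homology equivalence after localizing at the action of $\pi_{0}(B\cat C)$, and hence a weak equivalence after group completion.

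The main obstacle is making the higher-homotopy comparison fully rigorous. The cleanest route, following Thomason's original treatment, is to carry out the analysis level-wise on Segal's $\Gamma$-space model: at each level $\underline n$, adjoining $*$ as an allowed coordinate of the $n$-tuples in the Segal category $H\cat C(\underline n)$ produces a map $H\cat C(\underline n) \to H\cat C_+(\underline n)$ that becomes an equivalence after delooping, via a simplicial collapse of coordinates equal to $*$ using the strict unit relation. These level-wise equivalences then assemble to the desired stable equivalence of spectra by Segal's delooping machinery.
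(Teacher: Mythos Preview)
Your overall strategy coincides with the paper's: both reduce to comparing the bar constructions $\mathbf{B}(B\cat C)$ and $\mathbf{B}((B\cat C)_+)$, using the identification of $K^S$ with group completion and the homeomorphism $B(\cat C_+)\cong (B\cat C)_+$. Your core observation---that simplices in the bar construction of $(B\cat C)_+$ containing the new strict unit $*$ are degenerate and collapse to lower simplices---is exactly the right mechanism.

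Where you diverge from the paper is in what you do with that observation. The collapse argument already shows that $\mathbf{B}(B\cat C)\to \mathbf{B}((B\cat C)_+)$ is a weak equivalence \emph{before} taking loops, so your appeal to the group completion theorem and homology localization is an unnecessary detour, and your proposed level-wise $\Gamma$-space analysis is likewise more work than needed. The paper instead packages the collapse step as a single citation: writing $*_{M}$ for the one-object topological category with morphism monoid $M=B\cat C$, one notes that the unitalization of $*_M$ (regarded as non-unital) is exactly $*_{M_+}=*_{B\cat C_+}$, and then invokes \cite[Proposition~3.8]{ebert_semisimplicial_2019}, which says that a non-unital topological category and its unitalization have weakly equivalent classifying spaces. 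That proposition is precisely your ``simplicial collapse'' made rigorous in one stroke.
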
\begin{proof}
    Recall that the bar construction $\mathbf{B}(B\cat C)$ can be modeled by taking the classifying space of the one-object topological category whose morphism space is $B\cat C$; we note that if $\cat C$ is permutative, then $B\cat C$ is a strict topological monoid \cite[Theorem 4.10]{segal_e_1974} and so this indeed defines a space of morphisms. We write $*_{B\cat C}$ for this one-object topological category, to distinguish it from the space $B\cat C$. 
    Now apply \cite[Proposition 3.8]{ebert_semisimplicial_2019} to the topological category $*_{B\cat C}$, considered as non-unital (which does not change the homotopy type of the classifying space), noting that $(*_{B\cat C})^+ = *_{B\cat C_+}$. This shows that $\mathbf{B}(B\cat C)\simeq \mathbf{B}(B\cat C_+)$, and the claim follows.
\end{proof}

Although $K^S$ produces a spectrum from any symmetric monoidal category, it is typically applied to groupoids. Any symmetric monoidal category $\cat C$ has a \textit{core groupoid} $\cat C^{\cong}$, i.e. the wide subcategory of isomorphisms, and sometimes the definition of the $K$-theory of $\cat C$ is taken to be $K^S(\cat C^{\cong})$. With an eye towards Waldhausen categories, we will do something similar but allow for a larger subcategory of ``weak equivalences'' (e.g. coming from a Waldhausen structure), as in \cite{bohmann_multiplicative_2020}. 

\begin{definition}
    A \textit{symmetric monoidal category with weak equivalences} is a pair $(\cat{C},\cat{C}^{\simeq})$ where $\cat{C}$ is a symmetric monoidal category and $\cat C^{\simeq}\subseteq \cat C$ is a subcategory which contains all isomorphisms and is closed under the symmetric monoidal structure.
\end{definition}

\begin{definition}
    Let $\Sym_{we}$ denote the category whose objects are symmetric monoidal categories with weak equivalences and whose morphisms are strong symmetric monoidal functors which preserve the subcategories of weak equivalences. Similarly, we let $\Perm_{we}$ denote the full subcategory whose objects are pairs $(\cat{C},\cat{C}^{\simeq})$ where $\cat{C}$ is a permutative category.
\end{definition}

The $K$-theory of a symmetric monoidal category with weak equivalences is $K^S(\cat C, \cat C^{\simeq}) := K^S(\cat C^{\simeq})$; in particular, the underlying infinite loop space of $K^S(\cat C)$ is the group completion of the classifying space $B\cat C^{\simeq}$.  We note that most symmetric monoidal categories will not come with a natural choice of weak equivalences aside from the minimal choice $\cat C^{\simeq} = \cat C^{\cong}$ and maximal choice $\cat C^{\simeq } = \cat C$. 

We now show that every symmetric monoidal category with weak equivalences may be replaced with another which has the same $K$-theory but whose unit object is terminal.

\begin{definition}
    Let $(\cat C, \cat C^{\simeq})$ be a symmetric monoidal category with weak equivalences. Let $\cat C_*$ be the symmetric monoidal category whose objects are $\ob\cat C \amalg \{*\}$ and whose morphisms are \[\hom\cat C \amalg \{!_c\colon c\to *\mid c\in \ob \cat C\}\amalg \{\id_*\}.\] Composition works as in $\cat C$ and we declare that $c\to d\xrightarrow{!_d} *$ is $!_c$. The symmetric monoidal structure is the same as in $\cat C$, with $c\otimes * := c$ for all $c\in \ob\cat C$ and $f\otimes !_c = f$ for all $f\in \hom\cat C$; i.e. $*$ is a unit for the symmetric monoidal structure on $\cat C_*$. Set the weak equivalences of $\cat C_*$ to be $\cat C^{\simeq}\amalg \{\id_*\}$.
\end{definition}

That is, $\cat C_*$ is the category $\cat C$ with a new unit object $*$ which is terminal. Note that the original unit $\mathbb{I}$ is still a unit everywhere except $\mathbb{I}\otimes * = \mathbb{I}$.
Moreover, since $(\cat C_*)^{\simeq} = \cat C^{\simeq}_+$ by construction, the following result is immediate from \cref{lemma: non unital equivalence}. 

\begin{corollary}\label{cor:terminal unit same KT}
    The (non-unital) strong symmetric monoidal inclusion $(\cat C, \cat C^{\simeq})\hookrightarrow (\cat C_*, (\cat C_*)^{\simeq})$ induces a zig-zag of weak equivalences on $K$-theory.
\end{corollary}

In particular, taking $\cat C^{\simeq} = \cat C$, we obtain $K^S(\cat C)\simeq K^S(\cat C_+)= K^S(\cat C_*, (\cat C_*)^{\simeq})$.

\subsection{Waldhausen $K$-theory}
In \cite{ranicki_algebraic_1985}, Waldhausen develops a different $K$-theory construction that takes as input a different type of category, now called a Waldhausen category.

\begin{definition}
    A \textit{Waldhausen category} is a category $\cat C$ equipped with a zero object $*$ and two subcategories $c\cat C$ and $w\cat C$, called \textit{cofibrations} ($\cof$) and \textit{weak equivalences} ($\xrightarrow{\sim}$), satisfying the following conditions:\begin{itemize}
        \item[(i)] isomorphisms are both cofibrations and weak equivalences;
        \item[(ii)] for all objects $A\in \cat C$, the initial map $*\cof A$ is a cofibration;
        \item[(iii)] if $A\cof B$ is a cofibration and $A\to C$ is any morphism, the pushout $C\cup_A B$ exists and the induced map $C\cof C\cup_A B$ is a cofibration;
        \item[(iv)] gluing axiom: given a commutative diagram\[
        \begin{tikzcd}
            B \ar[d, swap, "\sim"] & A \ar[l,>->] \ar[r] \ar[d, "\sim"] & C \ar[d, "\sim"]\\
            B' & A' \ar[l,>->] \ar[r] & C'
        \end{tikzcd}
        \] the induced map $C\cup_A B \xrightarrow{\sim} C'\cup_{A'} B'$ is a weak equivalence.
    \end{itemize}
    An \textit{exact functor} between Waldhausen categories is a functor that preserves zero objects, cofibrations, weak equivalences, and pushouts along cofibrations. The category of (small) Waldhausen categories and exact functors is denoted $\Wald$. 
\end{definition}
Waldhausen defines a functor $K^W\colon \Wald\to \Sp$, which may be compared with Segal $K$-theory in certain cases. Every Waldhausen category has an underlying symmetric monoidal category, whose product is given by taking pushouts over the zero object. However, not every symmetric monoidal category is a Waldhausen category, as not every symmetric monoidal category has the coproduct as a choice of symmetric monoidal product.

\begin{definition}
    A \textit{Waldhausen category with choice of wedges} is the data of a Waldhausen category $\cat C$ along with, for all pairs of objects $(X,Y)\in \ob\cat{C}\times\ob\cat C$, a choice of pushout square \[
    \begin{tikzcd}
        * \ar[r, >->] \ar[d, >->] & X\ar[d, >->] \\
        Y \ar[r, >->] & X\vee Y
    \end{tikzcd}
    \] such that\[
    (X,*) \mapsto \begin{tikzcd}
        * \ar[r, >->] \ar[d, >->] & X\ar[d, >->, "\id"] \\
        * \ar[r, >->] & X
    \end{tikzcd} ~~~~\text{ and }~~~~ (*, X) \mapsto \begin{tikzcd}
        * \ar[r, >->] \ar[d, >->] & *\ar[d, >->] \\
        X \ar[r, >->, swap, "\id"] & X
    \end{tikzcd}.
    \]
    Note that the chosen maps $X\cof X\vee Y$ and $Y\cof X\vee Y$ must be cofibrations by axioms (ii) and (iii) for Waldhausen categories. A morphism of Waldhausen categories with wedges is an exact functor, and we denote the corresponding category by $\Wald_{\vee}$.
    \end{definition}

In \cite[Proposition 8.8]{bohmann_multiplicative_2020}, Bohmann--Osorno show that the there is a categorically enriched (multi)functor $\Lambda \colon \Wald_{\vee}\to \Sym_{we}$ which sends $\cat C$ to the pair $(\cat{C},w\cat{C})$, where the symmetric monoidal structure $\vee\colon \cat C\times \cat C\to \cat C$ is determined by the choices of wedges.  By postcomposing with strictification, we may freely assume that $\Lambda$ is a functor which takes values in $\Perm_{we}$.

The Waldhausen $K$-theory of $\cat C$ may not agree with the Segal $K$-theory of $\Lambda\cat{C}$. Sufficient conditions under which $K^W(\mathcal{C})\simeq K^S(\Lambda(\cat{C}))$ were given by Waldhausen \cite[\S 1.8]{ranicki_algebraic_1985} (see also \cite[Theorem 10.2]{bohmann_multiplicative_2020}).  We will use a variant of this result from a paper of Malkiewich--Merling, originally due to Badzioch--Dorabia\l a \cite{MalkiewichMerling:nonManifold}.  Recall that a Waldhausen category $\cat{C}$ satisfies the \emph{extension axiom} if whenever there is a map of cofiber sequences
\[
    \begin{tikzcd}
        A \ar[>->,r] \ar[d] & B \ar[d] \ar[->>,r] & C\ar[d]\\
        A'\ar[>->,r] & B' \ar[->>,r] & C'
    \end{tikzcd}
\]
where $A\to A'$ and $C\to C'$ are both weak equivalences then $B\to B'$ is also a weak equivalence.

\begin{definition}\label{definition: split cofibs}
    A Waldhausen category $\cat C$ \textit{has weakly split cofibrations} if for every cofibration $A\cof B$ there is a functorial choice of weak equivalence from  ${A\vee B/A}$ to  $B$, rel $A$.
\end{definition}
\begin{theorem}[{\cite[Proposition 2.16]{MalkiewichMerling:nonManifold}}]\label{theorem: weakly split K theory is the same}
    If $\cat C\in \Wald$ with choices of wedges, weakly split cofibrations, and the extension axiom, then there is a natural equivalence $K^{S}(\Lambda\cat C) \xrightarrow{\sim} K^W(\cat C)$.
\end{theorem}

\section{Proofs of main results}\label{section:main}

In this section we prove the main results of the paper.  We begin by proving \cref{theorem: main theorem intro}, which is recalled below as \cref{thm:main}.  After that we recall Thomason's work on inverse $K$-theory for symmetric monoidal categories, and combine this with \cref{theorem: main theorem intro} to deduce \cref{introthm:inverse K}.
\begin{theorem}\label{thm:main}
    There is a functor $\Gamma\colon \Perm_{we} \to \Wald$ such that for any permutative category $\cat{C}$ there is a natural zig-zag of stable equivalences $K^S(\cat{C}, \cat C^{\simeq})\xleftrightarrow{\sim} K^W(\Gamma(\cat{C}_*))$.
\end{theorem}

\begin{remark}
The natural stable equivalence above factors as $K^S(\cat{C}, \cat C^{\simeq})\to K^S(\cat C_*, (\cat C_*)^{\simeq})\leftrightarrow K^W(\Gamma(\cat C_*))$, where the first map is the equivalence of \cref{cor:terminal unit same KT}, and so in what follows we will assume we have already replaced $\cat C$ with $\cat C_*$, and for brevity just write $\cat C$ for a symmetric monoidal category whose unit object is terminal.
\end{remark}

\begin{remark}
    If $\cat{C}$ is a symmetric monoidal category and $\widehat{\cat{C}}$ is its strictification, then there is a stable equivalence $K^{S}(\cat{C})\to K^S(\widehat{\cat{C}})$.  Combining this with \cref{lemma: non unital equivalence} we see that for our purposes the case of a permutative category with terminal unit is generic, thus \cref{thm:main} recovers the statement of \cref{theorem: main theorem intro} from the introduction.
\end{remark}

The key idea in the construction of $\Gamma(\cat{C})$ is to enlarge $\cat{C}$ so that, in effect, the monoidal structure on $\cat{C}$ becomes the coproduct.  Moreover, this must be done so that $\Gamma(\cat{C})$ admits a Waldhausen structure. In the end, the category $\Gamma(\cat{C})$ we construct remains permutative, and we show there is a natural chain of stable equivalences
\[
    K^W(\Gamma(\cat{C}))\xleftarrow{\sim} K^S(w\Gamma(\cat{C})) \xleftarrow{\sim} K^S(\cat C_+) \xrightarrow{\sim} K^S(\cat{C}).
\]

\subsection{The Waldhausen category $\Gamma(\cat C)$}

We now proceed with the proof of \cref{thm:main}, beginning by defining the category $\Gamma(\cat{C})$ where $(\cat{C},\otimes,\mathbb{I})$ is a permutative category with $\mathbb{I}$ a terminal object. Let $\mathrm{Span}(\mathrm{Fin})$ denote the category of spans of finite sets.  Explicitly, the objects are the finite sets $\underline{n} = \{1,\dots,n\}$, and morphisms are isomorphism classes of spans \begin{equation}\label{equation: span 1}
        \underline{m}\xleftarrow{t} A\xrightarrow{r} \underline{n}
    \end{equation}
    where $A$ is some finite set.  Two spans are isomorphic if there is a bijection of middle sets which is compatible with the maps $r$ and $t$. Composition is given by pullback. Note that any span $\omega\colon \underline{m}\to \underline{n}$ can be ``turned around'' to produce a span $\omega^{\op}\colon \underline{n}\to \underline{m}$.

    \begin{definition}
        Let $\omega\colon \underline{m}\to \underline{n}$ be a span as in \eqref{equation: span 1}.  For any $i\in \underline{n}$, let $\omega(i)=\{t(j)\mid j\in r^{-1}(i)\}$ denote the ordered multi-subset of $\underline{m}$ (that is, $\omega(i)$ can have repeated values).  The ordering is induced by the total order of $\underline{m}$, with some (arbitrary) choice of ordering on repeated values.
    \end{definition}

    
    Any permutative category $(\cat{C},\otimes,\mathbb{I})$ determines a pseudofunctor $F_{\cat{C}}\colon\mathrm{Span}(\mathrm{Fin})\to \mathrm{Cat}$, which sends $\underline{n}$ to $\cat{C}^n$, and a span \eqref{equation: span 1} to a functor $\cat{C}^m\to \cat{C}^n$ which sends an $m$-tuple $\vec{y} = (y_1,\dots,y_m)$ to the $n$-tuple $\omega_*(\vec{y})$ whose $i$-th component is $\bigotimes\limits_{j\in \omega(i)} y_{j}$.

    \begin{definition}
         The category $\Gamma(\cat{C})$ is the Grothendieck construction of $F_{\cat{C}}$. The reader unfamiliar with Grothendieck constructions is recommended to \cite[Chapter 10]{JohnsonYau:2cats}.
         Objects are tuples $(x_1,\dots,x_n)\in \cat{C}^n$, and morphisms are pairs $(\omega,\vec{f})\colon (x_1,\dots,x_n)\to (y_1,\dots,y_m)$ where 
         \[
            \omega = [\underline{m}\xleftarrow{t} A\xrightarrow{r} \underline{n}]
         \]
         is a span, and $\vec{f}\colon (x_1,\dots,x_n)\to F_{\cat{C}}(\omega)(y_1,\dots,y_m)$ is a map in $\cat{C}^n$.  Explicitly, $\vec{f}$ consists of maps
         \[
            f_i\colon x_i\to \bigotimes\limits_{j\in \omega(i)} y_{j} = \omega_*(\vec{y})_i
         \]
         in $\cat{C}$. Note that, because $\cat{C}$ is a permutative category, any two choices of orderings for repeated values in $\omega(i)$ give the same value of the target. When $m=0$ we interpret empty products as the unit in $\cat{C}$, and hence $f_i$ is uniquely specified as the unit is assumed terminal in $\cat{C}$. For two composable arrows
         \[
            (x_1,\dots,x_n)\xrightarrow{(\omega,\vec{f})} (y_1,\dots,y_m)\xrightarrow{(\tau,\vec{g})}(z_1\dots,z_{\ell})
         \]
         the composite is $(\omega\circ\tau,\vec{h})$, where $h_i$ is the composite
         \[
            x_i \xrightarrow{f_i} \bigotimes\limits_{j\in \omega(i)} y_j\xrightarrow{\otimes g_j} \bigotimes\limits_{j\in \omega(i)}\bigotimes\limits_{k\in \tau(j)}z_k \cong \bigotimes_{k\in (\omega \circ \tau)(i)} z_k = (\omega\circ \tau)_*(\vec{z})_i
         \]
         where the isomorphism is a reordering isomorphism coming from the permutative structure on $\cat{C}$. We note that the reordering isomorphism only permutes $z_q \otimes z_p \mapsto z_p \otimes z_q$ when $p < q$.
\end{definition}

\begin{example}\label{example: C into N(C)}
    Given any morphism $f\colon c\to d$ in $\cat{C}$, there is a corresponding morphism $(\id_{\underline{1}},f)\colon (c)\to (d)$ in $\Gamma(\cat{C})$ and this assignment defines a fully faithful functor $\cat{C}\to \Gamma(\cat{C})$.
\end{example}
\begin{lemma}
    The empty tuple $()$ is a zero object in $\Gamma(\cat{C})$.
\end{lemma}
\begin{proof}
    If $\vec{x} = (x_1,\dots,x_n)$ is any object in $\Gamma(\cat{C})$ there is a unique morphism $(\omega,\vec{f})\colon ()\to\vec{x}$ given by 
    \[
        \omega = [\underline{n} \leftarrow \emptyset \to \emptyset]
    \]
    and $\vec{f}$, by definition, is an empty tuple of morphisms. Similarly, there is a unique morphism $(\sigma,\vec{g})\colon (x_1,\dots,x_n)\to ()$ with $\sigma = \omega^{\op}$, and each $g_i$ is the unique map $x_i\to \mathbb{I}$.
\end{proof}

We now define the Waldhausen structure on $\Gamma(\cat{C})$ by first defining a notion of cofibration and weak equivalence in $\Span(\Fin)$.

\begin{definition}
    We say that a span $\underline{m}\xleftarrow{t} A\xrightarrow{r} \underline{n}$ is a \emph{cofibration} if $t$ is injective and $r$ is a bijection.  We say it is a \emph{weak equivalence} if $r$ is surjective and $t$ is a bijection.
\end{definition}

\begin{definition}\label{defintion: cofibs and wes}
    A morphism $(\omega,\vec{f})\colon (x_1,\dots,x_n)\to (y_1,\dots,y_m)$ in $\Gamma(\cat{C})$ is a \emph{cofibration} (resp. \emph{weak equivalence}) if each $f_i$ is an isomorphism (resp. in $\cat C^{\simeq}$), and the span $\omega\colon \underline{m}\to \underline{n}$ is a cofibration (resp. weak equivalence).
\end{definition}
Intuitively, a map $\{x_i\}_{i=1}^n \to \{y_j\}_{j=1}^m$ maps each $x_i$ to a tensor product of some multiset of $\{y_j\}_{j=1}^m$. The cofibrations map each $x_i$ to a single $y_j$ via an isomorphism in $\cat C$, with no $y_j$ repeated more than once. The weak equivalences map each $x_i$ to a tensor of a non-empty \textit{sub}set of $\{y_j\}_{j=1}^m$ via a weak equivalence in $\cat C$, and these subsets form a partition of $\{y_j\}_{j=1}^m$.

 Basic properties of pullbacks of sets show that cofibrations and weak equivalences in $\Span(\Fin)$ are both closed under composition, and contain all isomorphisms. Consequently, cofibrations and weak equivalences in $\Gamma(\cat C)$ form subcategories.

\begin{example}\label{example: zero object}
    Since the unique span $\underline{n} \leftarrow \emptyset \to \emptyset$ is a cofibration in $\Span(\Fin)$, we see that the unique map $()\to (b_1,\dots,b_n)$ is always a cofibration.
\end{example}
\begin{example}\label{example: isos are cofibs and wes}
    Any isomorphism in $\Gamma(\cat{C})$ is a cofibration and a weak equivalence.
\end{example}

In proofs it will be convenient to replace arbitrary cofibrations with more standard ones.

\begin{definition}
    A \emph{standard cofibration} in $\Gamma(\cat{C})$ is a morphism \[(\omega,\vec{f})\colon (x_1,\dots,x_n)\to (x_1,\dots,x_n,z_1,\dots,z_{\ell})\] where $\omega = [\underline{n}\amalg \underline{\ell} \hookleftarrow \underline{n} \xrightarrow{=} \underline{n}]$, and the maps $f_i\colon x_i\to x_i$ are identities.  
\end{definition}

\begin{remark}\label{remark: standard cofibrations}
    Standard cofibrations are generic, in the sense that every cofibration factors as a standard cofibration followed by an isomorphism.
Thus, when proving statements about cofibrations, it often suffices to prove the statement for standard cofibrations. We will proceed in this way, in particular, when proving the pushout and gluing axioms for $\Gamma(\cat{C})$.
\end{remark}

To see that the cofibrations and weak equivalences above actually give $\Gamma(\cat C)$ a Waldhausen structure, we need to check that $\Gamma(\cat{C})$ has pushouts along cofibrations and that the gluing axiom holds. In each case, we first check the corresponding axioms hold in $\Span(\Fin)$, as the essential ideas lift to prove the corresponding claim in $\Gamma(\cat{C})$. 

\begin{lemma}\label{lemma: pushouts in SpanFin}
    The square 
\[\begin{tikzcd}[row sep =scriptsize]
	X & X & {X\amalg Z} \\
	A && {A\amalg Z} \\
	Y & Y & {Y\amalg Z}
	\arrow["{=}"', from=1-2, to=1-1]
	\arrow["i", hook, from=1-2, to=1-3]
	\arrow["r", from=2-1, to=1-1]
	\arrow["t"', from=2-1, to=3-1]
	\arrow["{r\amalg \id_Z}"', from=2-3, to=1-3]
	\arrow["{t\amalg \id}", from=2-3, to=3-3]
	\arrow["{=}", from=3-2, to=3-1]
	\arrow[hook, from=3-2, to=3-3]
\end{tikzcd}\]
is a pushout in $\Span(\Fin)$.
\end{lemma}
\begin{proof}
    The square commutes in $\Span(\Fin)$, since both composites are isomorphic to the span 
    \[
        X\xleftarrow{r} A \xrightarrow{i_Y\circ t} Y\amalg Z  .
    \]
    To show the universal property of the pushout, consider a diagram
    \begin{equation}\label{eq: pushout in spans}
    \begin{tikzcd}[row sep =scriptsize]
	X & X & {X\amalg Z} &&  \\
	A && {A\amalg Z} & {B_X\amalg B_Z} \\
	Y & Y & {Y\amalg Z} \\
	& C && \textcolor{gray}{C\amalg B_Z} \\
	 &&&& V
	\arrow["{=}"', from=1-2, to=1-1]
	\arrow["i", hook, from=1-2, to=1-3]
	\arrow["{p_1\amalg p_2}"', bend right=10, from=2-4, to=1-3]
	\arrow["s", bend left=10, from=2-4, to=5-5]
	\arrow["r", from=2-1, to=1-1]
	\arrow["t"', from=2-1, to=3-1]
	\arrow["{r\amalg \id_Z}"', from=2-3, to=1-3]
	\arrow["{t\amalg \id}", from=2-3, to=3-3]
	\arrow["{=}", from=3-2, to=3-1]
	\arrow[hook, from=3-2, to=3-3]
	\arrow[gray, dashed, "\textcolor{gray}{q\amalg p_2}"{description}, from=4-4, to=3-3]
	\arrow[gray, dashed, "\textcolor{gray}{u\amalg s}"{description}, from=4-4, to=5-5]
	\arrow["q", bend left=10, from=4-2, to=3-1]
	\arrow["u"', bend right=10, from=4-2, to=5-5]
\end{tikzcd}
\end{equation}
where the outside commutes in $\Span(\Fin)$.  The fact that the outside of this square commutes forces a bijection $B_X\cong C\times_Y A$, as sets over $X$ and $V$, and with this one can check that the entire diagram commutes in $\Span(\Fin)$.
Next, to see that the span with middle-set $C\amalg B_Z$ is the unique arrow which makes this diagram commute, note that any span $Y\amalg Z\xleftarrow{f} D\xrightarrow{g} V$ can be decomposed as 
\[
    Y\amalg Z\xleftarrow{f_1\amalg f_2} D_1\amalg D_2\xrightarrow{g_1\amalg g_2} V
\]
by letting $D_1 = f^{-1}(Y)$ and $D_2 = f^{-1}(Z)$.  The top-right region of the diagram forces $D_2\cong B_Z$ over $Z$ and $V$, and the bottom-left region forces $D_1\cong C$ over $Y$ and $V$, which completes the proof. 
\end{proof}
\begin{corollary}\label{cor: pushouts along cofibs}
    The category $\Gamma(\cat{C})$ has pushouts along cofibrations.
\end{corollary}
\begin{proof}
Suppose we are given a span\[
(y_1, \dots, y_m) \xleftarrow{(\omega, \vec{f})} (x_1, \dots, x_n) \xrightarrow{(\theta, \id_{x})} (x_1,\dots, x_n, z_1, \dots, z_l)
\] in $\Gamma(\cat C)$, where the right arrow is a standard cofibration, with $\theta  = [ \underline{n}\amalg \underline{\ell} \hookleftarrow \underline{n}\xrightarrow{=} \underline{n}] $, and write $\omega   = [ \underline{m} \xleftarrow{r} A\xrightarrow{t} \underline{n}]$. We claim that the pushout is given by\[\begin{tikzcd}[row sep =scriptsize]
	{(x_1,\dots,x_n)} &&& {(x_1,\dots,x_n,z_1,\dots z_\ell)} \\
	{(y_1,\dots,y_m)} &&& {(y_1,\dots,y_m,z_1,\dots,z_{\ell})} \\
	\arrow["{(\theta,\id_{x}) }", from=1-1, to=1-4]
	\arrow["{(\omega,\vec{f})}"', from=1-1, to=2-1]
	\arrow["{(\sigma,\vec{h})}", from=1-4, to=2-4]
	\arrow["{(\phi,\id_{y})}"', from=2-1, to=2-4]
\end{tikzcd}\]
where $ \phi = [ \underline{m}\amalg \underline{\ell} \hookleftarrow \underline{m}\xrightarrow{=} \underline{m}]$ and $ \sigma = [ \underline{m}\amalg \underline{\ell} \xleftarrow{r\amalg \id_{\underline{\ell}}} A\amalg\underline{l}\xrightarrow{t\amalg \id_{\underline{\ell}}} \underline{n}\amalg \underline{\ell}]$. We define $\vec{h}$ by
\[
    h_i = \begin{cases}
        f_i\colon x_i\to \bigotimes\limits_{j\in \omega(i)} y_j& i\leq m\\
        \id_{z_i}\colon z_i\to z_i & i>m
    \end{cases}
\]
and it is straightforward to check that the square commutes, and we note that $(\phi, \id_y)$ is a standard cofibration by construction. To show this square is a pushout, suppose we have a diagram in $\Gamma(\cat{C})$ 
\[\begin{tikzcd}[row sep =scriptsize]
	{(x_1,\dots,x_n)} &&& {(x_1,\dots,x_n,z_1,\dots z_\ell)} \\
	{(y_1,\dots,y_m)} &&& {(y_1,\dots,y_m,z_1,\dots,z_{\ell})} \\
	&&&& {(v_1,\dots,v_{k})}
	\arrow["{(\theta,\id_{x}) }", from=1-1, to=1-4]
	\arrow["{(\omega,\vec{f})}"', from=1-1, to=2-1]
	\arrow["{(\sigma,\vec{h})}", from=1-4, to=2-4]
	\arrow["{(\beta,\vec{b})}", curve={height=-24pt}, from=1-4, to=3-5]
	\arrow["{(\phi,\id_{y})}"', from=2-1, to=2-4]
	\arrow["{(\alpha,\vec{c})}"', curve={height=18pt}, from=2-1, to=3-5]
	\arrow[gray, "\textcolor{gray}{(\mu,\vec{g})}"{description}, dashed, from=2-4, to=3-5]
\end{tikzcd}\]
so that the outside of the diagram commutes; our goal is to construct $(\mu, \vec{g})$. We write
\[
\alpha   = [ \underline{k} \xleftarrow{u} C\xrightarrow{q} \underline{m}] \text{ and } \beta   = [ \underline{k} \xleftarrow{s} B_X\amalg B_Z\xrightarrow{p_1\amalg p_2} \underline{n}\amalg \underline{\ell}]
\]
so that the middle sets line up with the middle sets in the diagram \eqref{eq: pushout in spans}.  In particular, the span $\mu$ is uniquely determined as
\[
    \mu = [\underline{k} \xleftarrow{u\amalg s}C\amalg B_Z \xrightarrow{q\amalg p_2} \underline{m}\amalg \underline{\ell}].
\]
 What remains is to construct $\vec{g}$ so that the the diagram commutes in $\Gamma(\cat{C})$, and show this choice of $\vec{g}$ is unique. Note that for $1\leq i \leq m$ we have $\mu(i) = \alpha(i)$ and for $1\leq k\leq \ell$ we have $\mu(m+k) = \beta(n+k)$. Therefore, for $1\leq i\leq m$, we need to define a map
\[
    g_i\colon  y_i\to \bigotimes_{j\in \mu(i)}v_{j} =\bigotimes_{j\in \alpha(i)}v_{j} ;
\] and commutativity of the bottom triangle necessitates that the composite
\[
    y_i \xrightarrow{id_{y_i}} y_i\xrightarrow{g_i}\bigotimes_{j\in \alpha(i)}v_{j} 
\]
be equal to $c_i$, so we can (and must) take $g_i= c_i$. Similarly, for $1\leq k\leq \ell$, we need to define
\[
    g_{k+m}\colon  z_k\to \bigotimes_{j\in \mu(m+k)}w_{j} = \bigotimes_{j\in \beta(n+k)}w_{j} 
\]
and commutativity of the right-triangle implies that $g_{k+m}=b_{k+m}$. Since the values of $g_i$ are forced, the vector $\vec{g}$ is unique, so the top left-square is a pushout in $\Gamma(\cat{C})$. 
\end{proof}

\begin{lemma}
  Given a diagram in $\Span(\Fin)$
\[\begin{tikzcd}[row sep = scriptsize]
	X\amalg Z & X & Y && {X\amalg Z} & X & X & A & Y \\
	&&& {=} & {X'\amalg Z'} && {X'} && {Y'} \\
	X' \amalg Z' & X' & Y' && {X'\amalg Z'} & {X'} & {X'} & {A'} & {Y'}
	\arrow[from=1-1, to=3-1, swap, "\sim"]
	\arrow[from=1-2, to=1-1]
	\arrow[from=1-2, to=1-3]
	\arrow[from=1-2, to=3-2, "\sim"]
	\arrow[from=1-3, to=3-3, "\sim"]
	\arrow[hook', from=1-6, to=1-5]
	\arrow["{=}", from=1-6, to=1-7]
	\arrow[from=1-8, to=1-7]
	\arrow[from=1-8, to=1-9]
	\arrow["{r\amalg s}", two heads, from=2-5, to=1-5]
	\arrow["{=}"', from=2-5, to=3-5]
	\arrow["r", two heads, from=2-7, to=1-7]
	\arrow["{=}", from=2-7, to=3-7]
	\arrow["p"', two heads, from=2-9, to=1-9]
	\arrow["{=}", from=2-9, to=3-9]
	\arrow[from=3-2, to=3-1]
	\arrow[from=3-2, to=3-3]
	\arrow[hook', from=3-6, to=3-5]
	\arrow["{=}"', from=3-6, to=3-7]
	\arrow[from=3-8, to=3-7]
	\arrow[from=3-8, to=3-9]
\end{tikzcd},\]
the induced map on pushouts is the opposite span of a weak equivalence in $\Span(\Fin)$.
\end{lemma}
\begin{proof}
    By the construction of the universal map in diagram \eqref{eq: pushout in spans}, the induced map is
    \[
        Y\amalg Z \xleftarrow{p\amalg s}Y'\amalg Z' \xrightarrow{=} Y'\amalg Z'
    \]
    which is the opposite of a weak equivalence since both $p$ and $s$ are surjective by assumption.
\end{proof}

\begin{corollary}\label{cor: gluing axiom}
    The gluing axiom holds in $\Gamma(\cat{C})$.
\end{corollary}
\begin{proof}
    Consider a commutative diagram
\[\begin{tikzcd}[row sep =small]
	{(x_1,\dots,x_n,z_1,\dots,z_{\ell})} && {(x_1,\dots,x_n)} && {(y_1,\dots,y_m)} \\
	\\
	{(x_1',\dots,x_N',z_1',\dots,z_{L}')} && {(x'_1,\dots,x'_N)} && {(y_1',\dots,y_M')}
	\arrow["{(\omega,\vec{f})}", swap, from=1-1, to=3-1]
	\arrow["{(\theta,\vec{a})}"', >->, from=1-3, to=1-1]
	\arrow["{(\phi,\vec{b})}", from=1-3, to=1-5]
	\arrow["{(\sigma,\vec{e})}", from=1-3, to=3-3]
	\arrow["{(\tau,\vec{h})}", from=1-5, to=3-5]
	\arrow["{(\gamma,\vec{d})}", >->, from=3-3, to=3-1]
	\arrow["{(\psi,\vec{c})}"', from=3-3, to=3-5]
\end{tikzcd}\]
in $\Gamma(\cat{C})$ where the vertical maps are weak equivalences and the horizontal maps in the left-square are cofibrations.  Generically, the spans have the form
\begin{align*}
    \tau &= [\underline{M}\xleftarrow{=} \underline{M}\xrightarrow{p} \underline{m}]
      &  \theta &= [\underline{n}\amalg \underline{\ell}\hookleftarrow \underline{n}\xrightarrow{=} \underline{n}]
   &  \phi &= [\underline{m}\xleftarrow{f}A\xrightarrow{g} \underline{n}]\\
     \sigma &= [\underline{N}\xleftarrow{=} \underline{N}\xrightarrow{r} \underline{n}]
     & \gamma &= [\underline{N}\amalg \underline{L}\hookleftarrow \underline{N}\xrightarrow{=} \underline{N}] & \psi  &= [\underline{M}\xleftarrow{f'}A'\xrightarrow{g'} \underline{N}]\\
     \omega & = [\underline{N}\amalg \underline{L}\xleftarrow{=} \underline{N}\amalg \underline{L}\xrightarrow{r\amalg s} \underline{n}\amalg\underline{\ell}] & & & & 
\end{align*}
where $p$, $r$ and $s$ are all surjective maps. By the proof of \cref{cor: pushouts along cofibs}, the induced map on pushouts for this diagram is 
\[
    (\mu,\vec{g})\colon (y_1,\dots,y_m,z_1,\dots,z_{\ell})\to (y_1',\dots,y_{M}',z_1',\dots,z_{L}')
\]
where $\mu = [\underline{M}\amalg \underline{L}\xleftarrow{=} \underline{M}\amalg \underline{L} \xrightarrow{p\amalg s} \underline{m}\amalg \underline{\ell}]$ and every component of $\vec{g}$ is either an $f_i$ or an $h_i$.  Thus $\mu$ is a weak equivalence in $\Span(\Fin)$ and since all the vertical maps in the diagram are weak equivalences, all the $h_i$ and $f_i$ are weak equivalences in $\cat{C}$, so all the $g_i$ are weak equivalences in $\cat{C}$. This shows $(\mu,\vec{g})$ is a weak equivalence in $\Gamma(\cat{C})$.
\end{proof}

\begin{theorem}\label{theorem: Gamma is Waldhausen}
    The cofibrations and weak equivalences of \cref{defintion: cofibs and wes} give $\Gamma(\cat{C})$ a Waldhausen structure.  This construction is functorial in strong symmetric monoidal functors $(\cat{C},\cat{C}^{\simeq})\to (\cat{D},\cat{D}^{\simeq})$.
\end{theorem}
\begin{proof}
    The first statement is the content of \cref{cor: gluing axiom,cor: pushouts along cofibs,example: zero object,example: isos are cofibs and wes}.  For functoriality, note that a strong symmetric monoidal functor $\Phi\colon(\cat{C},\cat{C}^{\simeq})\to (\cat{D},\cat{D}^{\simeq})$ determines a pseudonatural transformation between pseudofunctors $F_{\cat{C}}\Rightarrow F_{\cat{D}}$ which, in turn, determines a functor $\Gamma(\Phi)\colon\Gamma(\cat{C})\to \Gamma(\cat{D})$ on Grothendieck constructions \cite[Lemma 10.3.5]{JohnsonYau:2cats}. Moreover, $\Gamma(\Phi)$ is exact because on morphisms $\Gamma(\Phi)$ is the identity on the span component (and $\Phi(\cat{C}^{\simeq})\subset \cat{D}^{\simeq}$ by assumption).  Checking that $\Gamma(\Phi)$ preserves pushouts along cofibrations is straightforward from the explicit description given in the proof of \cref{cor: pushouts along cofibs}.
\end{proof}

We now show that the cofibrations in $\Gamma(\cat{C})$ are weakly split, in the sense of \cref{definition: split cofibs}.
If $(\omega,f)\colon X\to Y$ is a cofibration in $\Gamma(\cat{C})$, we write $Y/X$ for the cofiber.  
\begin{proposition}\label{prop:weakly split cof}
    Given a cofibration $(\omega,\vec{f})\colon X\to Y$ in $\Gamma(\cat{C})$, there is a natural isomorphism $(\sigma,\vec{g})\colon X\vee (Y/X)\to Y$.
\end{proposition}
\begin{proof}
    Since standard cofibrations are generic, it suffices to prove the claim in the case  
    \[
        (\omega,\id_x)\colon (x_1,\dots,x_n)\to (x_1,\dots,x_n,z_{1},\dots,z_{\ell})
    \]
    where $\omega = [\underline{n}\amalg \underline{\ell}\hookleftarrow \underline{n} \xrightarrow{=} \underline{n}]$. In this case, the cofiber $Y/X = (z_1,\dots,z_{\ell})$ and the desired natural weak equivalence is the identity; more generally it is the canonical isomorphism which identifies $Y$ as a coproduct of $X$ and $Y/X$.
\end{proof}
\begin{remark}\label{remark: extension axiom}
    Because cofibration sequences in $\Gamma(\cat{C})$ are split up to natural \emph{isomorphism}, and weak equivalences are closed under coproducts, we observe that $\Gamma(\cat{C})$ satisfies the extension axiom.
\end{remark}

\subsection{Comparing the $K$-theory of $\Gamma(\cat C)$ and $\cat C$}
Finally, we prove that there is a natural weak equivalence $K^S(\cat C, \cat C^{\simeq})\xrightarrow{\sim} K^S(\Gamma(\cat{C}),w\Gamma(\cat C))$. This will be realized as the composite of maps  $K^S(\cat C, \cat C^{\simeq})\to K^S(\cat C_+, \cat C^{\simeq}_+)\to K^S(\Gamma(\cat{C}),w\Gamma(\cat C))$, where the second map is induced by the inclusion $\cat C_+\hookrightarrow \Gamma(\cat C)$ which sends $c\in \cat C$ to the singleton $(c)$ and $+\mapsto ()$. In the following proof, it becomes clear that we need to add in $+$ so that something maps to $()$.

\begin{proposition}\label{proposition: Quillen theorem A}
    The inclusion $s\colon\cat{C}^{\simeq}_+\to w\Gamma(\cat{C})$ is strictly unital oplax symmetric monoidal and induces a homotopy equivalence on classifying spaces.
\end{proposition}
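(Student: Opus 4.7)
The plan is to treat the two claims of the proposition separately.

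For the oplax monoidal structure, observe that $s(*) = ()$ is the strict unit of $\Gamma(\cat{C})$, so the unit map is the identity. For $c,d \in \cat{C}_+$ with neither equal to $*$, I would take the oplax comparison $\mu_{c,d}\colon s(c\otimes d) = (c\otimes d) \to (c,d) = s(c)\vee s(d)$ to be the weak equivalence with partition $\phi(1) = \{1,2\}$ and component $\id\colon c\otimes d \to T^2(c,d)$; when either $c$ or $d$ is $*$, $\mu$ is the identity. Naturality in $c,d$ and the associativity, symmetry, and (strict) unit coherence axioms all unwind via \cref{defn:composition in Gamma} into direct identities between partitions of finite sets and the permutative structure maps of $\cat{C}$.

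For the homotopy equivalence, I would construct an explicit homotopy inverse rather than directly invoke Quillen's Theorem A (the latter fails at $() \in w\Gamma(\cat{C})$, since the comma category $s\downarrow ()$ is isomorphic to $\cat{C}_+$, which is not contractible in general). Define $T\colon w\Gamma(\cat{C}) \to \cat{C}_+$ on objects by $T(B_1,\dots,B_b) = T^b(B_1,\dots,B_b)$ with $T() = *$, and on a weak equivalence $(\phi,f)\colon A\to B$ by tensoring the components $f_i\colon A_i\to T^{|\phi(i)|}(B_{\phi(i)})$ and postcomposing with the symmetry of $\cat{C}$ that reorders the concatenation $\phi(1),\dots,\phi(a)$ into the natural order on $[b]$. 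Functoriality of $T$ follows from strict associativity and symmetry coherence in $\cat{C}$, mirroring \cref{defn:composition in Gamma}, and directly from the definitions we have $T\circ s = \id_{\cat{C}_+}$.

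To finish, I would produce a natural transformation $\eta\colon s\circ T\Rightarrow \id_{w\Gamma(\cat{C})}$ with component $\eta_B\colon (T^b(B_\bullet)) \to B$ specified by partition $\phi(1)=[b]$ and morphism $\id$ (and $\eta_{()}$ the identity of $()$). Each $\eta_B$ is a weak equivalence, and for any $\alpha\colon A\to B$ an unwinding of \cref{defn:composition in Gamma} shows that both composites $\eta_B\circ s(T(\alpha))$ and $\alpha\circ \eta_A$ agree with the weak equivalence $(T^a(A_\bullet))\to B$ of partition $1\mapsto [b]$ and morphism $T(\alpha)$. Since natural transformations of functors induce homotopies between the induced maps on classifying spaces, this gives $Bs\circ BT \simeq \id_{Bw\Gamma(\cat{C})}$, and combined with $BT\circ Bs = \id_{B\cat{C}_+}$ we conclude that $Bs$ is a homotopy equivalence. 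The main bookkeeping challenge is verifying functoriality of $T$ and naturality of $\eta$, both of which require carefully tracking the symmetry permutations of $\cat{C}$ arising from reorderings of the indexing sets $\phi(i)$.
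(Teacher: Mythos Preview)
Your oplax monoidal structure is exactly the paper's. For the homotopy equivalence, however, the paper \emph{does} apply Quillen's Theorem~A directly; your parenthetical claim that Theorem~A fails at $()$ rests on a misreading of \cref{def:weak equiv}. The clause ``the sets $\phi(i)$ partition $[b]$'' and the subsequent remark that $\phi$ comes from a surjection $[b]\to[a]$ both indicate that empty blocks are excluded, so there is \emph{no} weak equivalence $(c)\to()$ for $c\in\cat C$, and the comma category $s\downarrow()$ consists of the single object $\id\colon s(*)=()\to()$. For $A\neq()$ the paper observes that $s\downarrow A$ has a terminal object, namely $s(T^a(A_1,\dots,A_a))\to A$ (the very map you call $\eta_A$), and Theorem~A applies.

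That said, your explicit-inverse argument is correct and is a genuinely different route. Note, though, that your retraction $T$ and the natural transformation $\eta$ are well-defined precisely under the intended (nonempty-block) reading of weak equivalence: if some $\phi(i)=\varnothing$ were permitted, there would be no component $f_i$ and hence no canonical map out of the $A_i$-factor, so your construction of $T(\phi,f)$ would break in exactly the same place you claim Theorem~A does. Under the correct reading both approaches work. The paper's argument is shorter and sidesteps the functoriality check for $T$ (which, as you acknowledge, requires carefully tracking the reordering permutations through the composition formula of \cref{defn:composition in Gamma}); your approach is more explicit and exhibits the deformation retraction concretely. Either is a valid proof.
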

\begin{proof}
    The functor $s$ is strictly unital by construction. To see $s$ is oplax, define structure maps \begin{equation}\label{equation: terminal object}
        (\omega,f)\colon(c_1\otimes\dots\otimes c_n)\to (c_1,\dots,c_n)
    \end{equation}
    in $w\Gamma(\cat{C})$ where $\omega = [\underline{n}\xleftarrow{=}\underline{n} \to \underline{1}]$ and $f = \id_{c_1\otimes\dots\otimes c_n}$. Since $\omega^{\op}$ is a weak equivalence in $\Span(\Fin)$, and $f$ is an isomorphism, $(\omega,f)$ is indeed a morphism in $w\Gamma(\cat{C})$. That this defines an oplax structure map follows from the composition of spans and the fact that identities compose to identities.

    To see that $s$ induces an equivalence on classifying spaces we apply Quillen Theorem A.  That is, we need to show, for any object $\vec{c}\in \Gamma(\cat{C})$, that the fiber category $s\downarrow\vec{c}$ is contractible. 
    
    The fiber category $s\downarrow ()$ has only one object, the identity $s(+)=()$, as there are no other weak equvialences to $()$ in $\Gamma(\cat{C})$. For $(c_1,\dots,c_n)$ with $n\geq 1$, we will prove that the category $s\downarrow (c_1,\dots,c_n)$ is contractible by showing it has a terminal object given by
    \[
        s(c_1\otimes\dots\otimes c_n)\xrightarrow{(\phi,f)} (c_1,\dots,c_n)
    \]
    where $(\phi,f)$ is the same map as \eqref{equation: terminal object}.
    Given a weak equivalence $(\psi,g)\colon s(X)\to (c_1,\dots,c_n)$, we want to show there is a unique weak equivalence $h\colon X\to c_1\otimes\dots\otimes c_n$ in $\cat{C}_+$ making the diagram
\[\begin{tikzcd}
	{(X)} & {(c_1\otimes\dots\otimes c_n)} \\
	& {(c_1,\dots,c_{n})}
	\arrow["{s(h)}", from=1-1, to=1-2]
	\arrow["{(\psi,g)}"', from=1-1, to=2-2]
	\arrow["{(\phi,f)}", from=1-2, to=2-2]
\end{tikzcd}\]
    commute.  By definition, $s(h) = (\id_{\underline{1}},h)$, and the composite $(\phi,f)\circ (s,h) = (\phi,h)\colon (X)\to (c_1,\dots,c_n)$. Since $(\psi,g)$ is a weak equivalence we must have $\psi = [\underline{n}\xleftarrow{=}\underline{n}\to \underline{1}] = \phi$, and thus we have $(\phi,f)\circ (s,h) = (\phi,h) = (\psi,g)$ if and only if $h = g$.  In particular $h=g$ defines the unique morphism from $s(X)$ to $(c_1\otimes\dots\otimes c_n)$ in the over category $s\downarrow(c_1,\dots,c_n)$, hence $(c_1\otimes\dots\otimes c_n)$ is a terminal element in this category.
    
\end{proof}

The strictly unital oplax symmetric monoidal functor $s$ in the proposition above is sent, by the oplax Segal construction (see \cite[Section 8.3]{JohnsonYau:Bimonoidal}), to a morphism of special $\Gamma$-categories that is a nerve-equivalence at level 1, by \cref{proposition: Quillen theorem A}, and hence induces a level weak equivalence (hence stable equivalence) of Segal $K$-theory spectra. We have now assembled everything we need to prove the main theorem.  

\begin{proof}[Proof of \cref{thm:main}]
    By \cref{prop:weakly split cof} and \cref{theorem: Gamma is Waldhausen}, $\Gamma(\cat{C})$ is a Waldhausen category with choices of wedges and weakly split cofibrations, and $\Gamma(\cat{C})$ satisfies the extension axiom by \cref{remark: extension axiom}. Thus by \cref{theorem: weakly split K theory is the same} we have a weak equivalence $K^S(\Lambda\Gamma(\cat{C}))\to K^W(\Gamma(\cat{C}))$. \cref{proposition: Quillen theorem A,lemma: non unital equivalence} combine to give a map of $\EE_{\infty}$-spaces $B\cat{C}^{\simeq}\to Bw\Gamma(\cat{C})$ which is a weak equivalence and so group completes to a weak equivalence. There is thus a zig-zag of equivalences 
    \[
        K^S(\cat{C}, \cat C^{\simeq})\xleftarrow{\sim} K^S(\cat{C}_+, \cat C^{\simeq}_+)\xrightarrow{\sim} K^S(\Lambda\Gamma(\cat{C}))\xrightarrow{\sim} K^W(\Gamma(\cat{C})).
    \]
    \end{proof}

\subsection{Inverse Waldhausen $K$-theory}
\cref{thm:main} quickly implies one version of \cref{introthm:inverse K}.

\begin{theorem}\label{thm: inverse wald version 1}
    For every connective spectrum $E$, there is a Waldhausen category $\cat C$ and a zig-zag of weak equivalences $K^W(\cat C)\leftrightarrow E$. 
\end{theorem}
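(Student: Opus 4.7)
The plan is to combine Thomason's inverse $K$-theory theorem for symmetric monoidal categories with the main theorem \cref{thm:main} proved in this section. Thomason shows in \cite{thomason_symmetric_1995} that for every connective spectrum $E$ there is a permutative category $\cat D$ (one model being the permutative category of ``homotopically discrete retractive spaces'' $\cat N(\Loop^{\infty} E)$ referenced in the introduction) together with a natural zig-zag of stable equivalences between $E$ and $K^S(\cat D)$. This already essentially solves the problem: the only remaining step is to convert the Segal $K$-theory output into a Waldhausen $K$-theory output.

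First I would recall (or cite) Thomason's theorem to fix a permutative category $\cat D$ and a zig-zag
\[
    E \xleftarrow{\sim} \cdots \xrightarrow{\sim} K^S(\cat D).
\]
Next, I would set $\cat C := \Gamma(\cat D)$, which is a Waldhausen category by the construction carried out earlier in \cref{section:main}. Applying \cref{thm:main} produces a natural stable equivalence $K^S(\cat D)\xrightarrow{\sim} K^W(\Gamma(\cat D)) = K^W(\cat C)$. Splicing this equivalence onto the end of Thomason's zig-zag yields the desired zig-zag of weak equivalences between $E$ and $K^W(\cat C)$.

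There is essentially no obstacle here, since both ingredients are already in place: the construction of $\Gamma$ and its comparison with $K^S$ is the substantive content of \cref{thm:main}, while the existence of $\cat D$ with $K^S(\cat D)\simeq E$ is exactly Thomason's theorem. The only minor care needed is to ensure Thomason's result is applied to a permutative (rather than merely symmetric monoidal) category, so that $\Gamma$ can be applied directly; this is harmless because any symmetric monoidal category can be strictified to an equivalent permutative category with no change in Segal $K$-theory, as noted in the remark following \cref{thm:main}. The stronger statement \cref{introthm:inverse K}, which additionally asserts an equivalence of relative homotopy categories, would require checking functoriality of the whole package, but this is not needed for the bare existence statement proved here.
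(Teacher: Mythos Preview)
Your proposal is correct and matches the paper's proof exactly: invoke Thomason's inverse $K$-theory to obtain a symmetric monoidal (hence, after strictification, permutative) category $\cat D$ with $K^S(\cat D)\simeq E$, then set $\cat C=\Gamma(\cat D)$ and append the equivalence from \cref{thm:main}. One small correction to your parenthetical aside: $\cat N(\Loop^{\infty} E)$ as referenced in the introduction is the \emph{Waldhausen} category constructed later in the paper, not Thomason's permutative category (that is $S(\Null/\Loop^{\infty} E)$); this does not affect your argument.
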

\begin{proof}
    In \cite[\S 5]{thomason_symmetric_1995}, Thomason shows that there is a functor $\Sp^{\geq 0}\to \Sym$ which is an inverse equivalence, after localization at stable equivalences, to $K^S$. Strictifying, and adding a terminal unit, we may assume this functor lands in the full subcateogry of $\Perm_{we}$ on objects with terminal units. Post-composing with the functor $\Gamma\colon \Sym\to \Wald_{\vee}$, we obtain a functor $\Sp^{\geq 0}\to \Wald_{\vee}$, and by \cref{thm:main} this defines a homotopy right-inverse to $K^W$.
\end{proof}
The Waldhausen category $\cat C$ in the theorem above is obtained by applying $\Gamma$ to the output of Thomason's inverse $K$-theory functor on $E$. 
In the remainder of the paper, we give an explicit description of a Waldhausen category $\cat{N}(X)$, for $X$ a connective spectrum, such that $K^W(\cat{N}(X))\simeq X$. Rather than unpacking the result of the proof of \cref{thm: inverse wald version 1}, we outline a slight modification of the $\Gamma$ construction which skips the strictification procedure, as this leaves the underlying topology more apparent. We assume the reader is familiar with the language of operads and $\EE_{\infty}$-spaces, and the reader unfamiliar with these notions is referred to \cite{may_geometry_1972}.  For the remainder of the section, we fix an $\EE_{\infty}$-operad $\cO$, given by the geometric realization of the Barratt--Eccles operad \cite{May74}. When we say $\EE_{\infty}$-space, or connective spectrum, we mean an $\cO$-algebra. We use $\Sp^{\geq 0}$ to denote a $1$-category of connective spectra, which may be modeled, for instance, by sequential spectra or grouplike $\mathbb{E}_{\infty}$-spaces.

We briefly summarize Thomason's argument from \cite{thomason_symmetric_1995}. The main construction is a functor\[
\Null/\colon \Sp^{\geq 0} \to \LaxSym
\] which associates every connective spectrum $E$ to a lax symmetric monoidal category $\Null/\Loop^{\infty}E$, recalled below in \cref{def:null X}. In \cite[\S 5]{thomason_symmetric_1995}, Thomason proves that the composite\[
\Sp^{\geq 0} \xrightarrow{\Null/} \LaxSym \xrightarrow{S} \Sym
\] is an inverse equivalence (after localization at stable equivalences) to $K^{S}$. 

A lax symmetric monoidal category is like a symmetric monoidal category, i.e.\ a category $\cat C$ equipped with an $n$-fold tensor product $T^n\colon \cat C^{\times n}\to \cat C$ for $n\geq 0$ (with $*\to \cat C$ picking out a distinguished object), except the associator and unitor transformations need not be invertible; see \cite[Definition 3.1.1]{leinster_higher_2004} for a complete description. 
Lax symmetric monoidal categories assemble into a $2$-category $\LaxSym$ whose $1$-cells are given by lax symmetric monoidal functors and whose $2$-cells are lax symmetric monoidal natural transformations (see \cite[Definitions 3.1.3 and 3.1.4]{leinster_higher_2004}). The inclusion of symmetric monoidal categories into lax ones gives a $2$-functor $U\colon \Sym\to \LaxSym$.

In \cite[Proposition 2.5]{thomason_symmetric_1995}, Thomason constructs a $2$-functor $
S\colon \LaxSym\to \Sym$ along with a 2-categorical natural transformation $\eta\colon S\circ U\Rightarrow 1_{\Sym}$. Additionally, he extends Segal's group completion functor $K^{S}\colon \Sym\to \Sp^{\geq 0}$ to apply to lax symmetric monoidal categories. In particular, there is a functor\[
Spt\colon \LaxSym\to \Sp^{\geq 0}
\] which, for $\cat C\in \LaxSym$, produces the group completion of the classifying space $B\cat C$. He produces a natural chain of weak equivalences between $Spt(\cat C)$ and $Spt(US\cat C)$, and for any symmetric monoidal category $\cat A$, there is a natural chain of weak equivalences between $Spt(U\cat A)$ and $K^{S}(\cat A)$. Consequently, for any $\cat C\in \LaxSym$, there is a natural chain of weak equivalences between $Spt(\cat C)$ and $K^{S}(S\cat C)$.

The missing piece of the argument is then to associate a connective spectrum $E$ to a lax symmetric monoidal category whose group completion $K$-theory recovers $E$. This is precisely the role fulfilled by $\Null/\Loop^{\infty} E$, whose definition we now recall.

\begin{definition}\label{def:null X}
    Let $X$ be an $\EE_{\infty}$-space and define $\Null/X$ to be the category whose objects are continuous maps $f_C\colon C\to X$, where $C$ is a weakly contractible space. We will abbreviate objects as simply $C$.  A morphism $g\colon C\to D$ is simply a continuous map such that $f_{D}\circ g = f_{C}$.
\end{definition}

In \cite[(4.5.2--4.5.5)]{thomason_symmetric_1995}, Thomason specifies a lax symmetric monoidal structure on $\Null/X$ by using the $\mathbb{E}_\infty$-structure on $X$. For $n\geq 1$, the $n$-fold tensor product $T^n(C_1, \dots, C_n)$ is given by\[
\mathcal O(n) \times C_1\times \dots \times C_n \xrightarrow{1\times f_1\times\dots \times f_{n}} \mathcal O(n) \times X \times \dots \times X \xrightarrow{\alpha_n} X,
\] where $\alpha_n$ is the $\mathcal{O}$-algebra structure map for $X$.  The map $\iota_C\colon C\to T^1(C)$ is the map 
$
    C\to \cO(1)\times C
$
given by $c\mapsto (\mathrm{id},c)$ where $\mathrm{id}\in \cO(1)$ is the distinguished unit element.  The associativity and symmetry relations come for free from the corresponding relations for algebras over an operad. The consequence of Thomason's work is that there is a natural zig-zag of weak equivalences $X\leftrightarrow \Loop^{\infty} K^S (S(\Null/X))$ when $X$ is grouplike, and therefore by \cref{thm:main} there is a natural zig-zag of weak equivalences $X\leftrightarrow \Loop^{\infty} K^W(\Gamma S(\Null/X))$.

Our goal is to construct a Waldhausen category $\cat N(X)$ which produces an equivalent $K$-theory spectrum to $\Gamma S(\Null/X)$, in the spirit of the $\Gamma$-construction but more evidently related to $\Null/X$ and more explicit. In particular, the objects of $\cat N(X)$ are ``homotopically discrete'' retractive spaces over $X$, but the morphisms are quite different than just maps rel $X$.

\begin{definition}\label{definition: N(X)}
    Let $\cat N(X)$ be the category whose objects are retractive spaces 
    \[
        \begin{tikzcd}
            A\ar[r,shift left, "r_A"] & X\ar[l,shift left,"s_A"]
        \end{tikzcd}
    \]
    over $X$ of the form $A = X\amalg \coprod_{i=1}^a A_i$, where each $A_i$ is weakly contractible and the inclusion $X\to A$ is the identity map from $X$ to the first summand.
    A morphism $(\phi, \vec{f})\colon A\to B$ is given by the following data:\begin{itemize}
    \item an isomorphism class of spans $[\phi\colon \underline{b}\leftarrow M \to \underline{a}]$ of finite sets;
    \item for $1\leq i\leq a$, with $\phi(i) = \{j_1\leq \dots\leq j_p\}$, a map $f_i\colon A_i\to T^p(B_{j_1}, \dots, B_{j_p})$ such that \[
    \begin{tikzcd}
        A_i \ar[r, "f_i"] \ar[d, swap, "r_A"]& T^{p}(B_{j_1}, \dots, B_{j_{p}}) \ar[d, "{T^{p}(r_{B}, \dots, r_B)}"] \\
        X & T^{p}(X, \dots, X) \ar[l, "\alpha_{p}"]
    \end{tikzcd}
    \] commutes.  We adopt the notation $T^{\phi(i)}(B_{\phi(i)}) := T^p(B_{j_1}, \dots, B_{j_p})$.  Note that when $\phi(i)=\emptyset$ the map $f_i\colon A_i\to T^0() = *$ is necessarily the terminal map.
\end{itemize}
\end{definition}

Since every space in sight (except $X$) is weakly contractible, the maps $f_i$ are all weak homotopy equivalences.  Thus we think about the data of a map $A\to B$ as an explicit choice of how to ``replace'' some of the $B_i$ with some of the $A_i$, at least up to weak equivalence. The $j\in \underline{b}$ so that $j\not\in \phi(i)$ for any $i$ correspond to the components $B_j$ that receive \textit{no map} from any $A_i$, and similarly the $i\in \underline{a}$ with $\phi(i) = \varnothing$ correspond to the $A_i$ which are not mapped to any $B_j$. 

The idea essentially is that we can extend the definition of $\Gamma$ to take lax symmetric monoidal categories as input; we should think of an object $A\in \cat N(X)$ as defining the tuple $(A_i)_{i=1}^n$ in $\Gamma(\Null/X)$. All of the definitions (identities, composition, cofibrations, weak equivalences, and pushouts) work essentially the same way as in $\Gamma(\cat C)$, except they are slightly more complicated due to the lax structure. We detail how these definitions work for $\cat N(X)\cong \Gamma(\Null/X)$, although it is not difficult to see how one would extend the definition for arbitrary lax symmetric monoidal categories.

\begin{definition}
The identity maps in $\cat N(X)$ are given by $(\id_{\underline{a}},\vec{f})$ where each $f_i\colon A_i\to T^1(A_i)=A_i\times \mathcal{O}(1)$ sends $x\in A_i$ to $(x,\id)$ where $\id$ is the unit of $\mathcal{O}$. Composition is defined just as in $\Gamma$, except that checking that the result is actually another morphism in $\cat N(X)$ becomes a bit more involved (one needs to use the axioms of lax symmetric monoidal categories, the associative law for algebras over operads, and the symmetric equivariance of the operad action).
\end{definition}

\begin{proposition}
    There is a Waldhausen structure on $\cat N(X)$ given by:\begin{itemize}
        \item the zero object is $X$ itself;
        \item the weak equivalences are as in \cref{defintion: cofibs and wes};
        \item the cofibrations are as in \cref{defintion: cofibs and wes}, except that the maps $A_i\to T^1(B_{\phi(i)})$ are required to factor as\[
        A_i \xrightarrow{f_i} B_{\phi(i)} \xrightarrow{(-,\id)} B_{\phi(i)}\times \mathcal O(1)
        \] where $f_i$ is a homeomorphism;
    \end{itemize}
\end{proposition}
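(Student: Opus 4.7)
The plan is to verify the Waldhausen axioms by direct adaptation of the arguments given for $\Gamma(\cat C)$ in the preceding subsection. Indeed, the category $\cat N(X)$ is morally $\Gamma$ applied to $\Null/X$, once the definition of $\Gamma$ is extended to accept lax symmetric monoidal input; the modifications to cofibrations and pushouts listed in the statement are exactly those needed to account for the fact that the unitor $\iota_C\colon C\to T^1(C) = \cO(1)\times C$ is no longer the identity.

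The easy axioms---that $X$ is a zero object (being the ``empty tuple'', with unique maps in and out), that isomorphisms are both cofibrations and weak equivalences, and that $X\cof A$ is a cofibration for every $A$ (given by the empty function $\emptyset\to 2^{[a]}$)---are immediate from the definitions. For the existence of pushouts, given a cofibration $(\phi,f)\colon A\cof B$ and any map $(\psi,g)\colon A\to C$, I would construct the pushout $D = B\vee_A C$ by the same combinatorial recipe as in \cref{defn: pushout}: take the indexing set to be $[c]\amalg (\im\phi)^c$ (ordered with $[c]$ first), with $D_i = C_i$ for $i\in[c]$ and $D_i = B_i$ for $i\not\in\im\phi$. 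The retraction $r_D\colon D\to X$ is inherited factor-wise from the retractions on $C$ and $B$, and each $D_i$ is weakly contractible since the components of $B$ and $C$ are, so $D\in \cat N(X)$. The canonical maps $C\to D$ and $B\to D$ are built exactly as in the $\Gamma(\cat C)$ case, with the single change that the ``identity'' component maps are replaced by the lax unit $(-,\id)\colon B_j\to B_j\times\cO(1) = T^1(B_j)$. Verification of the universal property then proceeds by the same case analysis. Coproducts arise as the special case $A\vee B := A\vee_X B$, i.e.\ concatenation.

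For the gluing axiom, I would reuse the reduction from the proof for $\Gamma(\cat C)$: the induced map of pushouts is a weak equivalence if and only if its underlying combinatorial map of indexing sets is a partition, and this follows from the partition property of the original vertical weak equivalences together with the complement condition for the horizontal cofibrations. The main obstacle throughout is the bookkeeping needed to confirm that the lax/operadic structure is preserved at every step. In particular, one must check that every morphism constructed (composites, pushout maps, induced maps in the universal property) satisfies the compatibility square with $\alpha_p$ involving $r_A$ and $r_B$; this relies on the associativity, unitality, and equivariance axioms for the $\cO$-algebra structure on $X$, together with the lax symmetric monoidal coherences of $\Null/X$. These checks are routine but tedious; once they are performed, the remainder of the verification is a formal transcription of the arguments already given for $\Gamma(\cat C)$.
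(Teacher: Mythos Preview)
Your proposal is correct and matches the paper's own treatment: the paper does not give a detailed proof of this proposition but simply remarks that ``essentially the same proofs as the previous section work to show this indeed defines a Waldhausen structure, with the only additional complexity coming from keeping track of the additional coherence data in a lax monoidal category.'' Your outline is a faithful expansion of precisely this strategy, correctly identifying the lax unitor $(-,\id)\colon C\to T^1(C)$ as the sole source of modifications and the operadic/lax coherence checks as the only new work.
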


Essentially the same proofs as the previous section work to show this indeed defines a Waldhausen structure, with the only additional complexity coming from keeping track of the additional coherence data in a lax monoidal category. Moreover, $\cat N(X)$ has a choice of wedges as before, with\[
A \cup_X B := X \amalg \coprod_{i=0}^a A_i \amalg \coprod_{j=0}^b B_j.
\] The same proof as \cref{prop:weakly split cof} show $\cat N(X)$ has weakly split cofibrations, and consequently $K^S(w\cat N(X))\xrightarrow{\sim} K^W(\cat N(X))$. 
Finally, the analogue of the inclusion $\cat C\hookrightarrow\Gamma(\cat C)$ is given by an oplax symmetric monoidal functor $s\colon \Null/X\to w\cat N(X)$ which, on objects, is given by $s(C) = X\amalg C$, viewed as a retractive space using the given map $C\to X$. A morphism $h\colon C\to D$ is sent to the map \[
X\amalg C \xrightarrow{(\id_{\underline{1}}, h')} X\amalg D
\] supplied by the data of the identity on $X$ and $C\xrightarrow{h} D \xrightarrow{\iota_D} D\times \mathcal{O}(1)$. 

This definition is evidently functorial, and we can check that it is oplax symmetric monoidal
\[
    s(T^n(C_1,\dots, C_n)) = X\amalg (C_1\times \dots \times C_n\times O(n)) \xrightarrow{(\phi,f)} X\amalg \coprod_{i=1}^n C_i  = s(C_1) \cup_X\dots \cup_X s(C_n)
\] 
where 
\[
(\phi,f) = (\underline{n}\xleftarrow{=}\underline{n}\xrightarrow{!}\underline{1},\id_{T^n(C_1,\dots, C_n)}).
\]
The definition on morphisms is similar. However, as before, this inclusion is not lax unital: the lax unit in $\Null/X$ is $x_0\to X$, where $x_0\in X$ is the image of the map $\mathcal O(0)\to X$, whereas the unit in $\cat N(X)$ is $X$. The same fix as before works, using the auxiliary category $(\Null/X)_+$, factoring the inclusion as $\Null/X\hookrightarrow (\Null/X)_+ \to w\cat N(X)$, and showing that the first map induces an equivalence after group completion and the second induces an equivalence on classifying spaces.

\begin{corollary}
    There is a functor $\cat N\colon \EE_{\infty}\text{-spaces}\to \Wald_{\vee}$ such that $K^W(\cat N(X))\simeq K^{S}(w\cat N(X))$ for all $X\in \Sp^{\geq 0}$.
\end{corollary}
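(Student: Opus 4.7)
My plan is to address the two claims of the corollary separately: first the functoriality of $\cat N$ in $X$, and second the comparison $K^W(\cat N(X))\simeq K^S(w\cat N(X))$.

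For functoriality, given a map $f\colon X\to Y$ of $\EE_{\infty}$-spaces, I would define $\cat N(f)\colon \cat N(X)\to \cat N(Y)$ as follows. On an object $A = X\amalg \coprod_{i=1}^a A_i$ with retraction $r_A$, set $\cat N(f)(A) = Y\amalg \coprod_{i=1}^a A_i$, with retraction $f\circ r_A$ and section the canonical inclusion $Y\hookrightarrow Y\amalg \coprod_i A_i$. On a morphism $(\phi,(f_i))\colon A\to B$, set $\cat N(f)(\phi,(f_i)) = (\phi,(f_i))$ with the same underlying combinatorial and continuous data; the defining commuting square in $\cat N(Y)$ is obtained from the one in $\cat N(X)$ by post-composing with $f$ and using that $f$ is an $\cO$-algebra morphism, so it intertwines the operad structure maps $\alpha_p$. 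Functoriality in $f$ is immediate, and $\cat N(f)$ is exact: the zero object is preserved (since $X\mapsto Y$), cofibrations and weak equivalences depend only on the tuple indexing data which is unchanged, wedges (defined by $A\cup_X B$, i.e.\ concatenation of tuples) are preserved, and likewise pushouts along cofibrations, which are computed by the same concatenation formula as in $\Gamma(\cat C)$.

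For the $K$-theory comparison, this is a direct application of \cref{theorem: weakly split K theory is the same}. The preceding discussion has already established that $\cat N(X)$ is a Waldhausen category equipped with a canonical choice of wedges, and has observed that the argument of \cref{prop:weakly split cof} carries over verbatim to show that every cofibration $A\cof B$ admits a weak equivalence $A\cup_X (B/A)\xrightarrow{\sim} B$ rel $A$, so $\cat N(X)$ has weakly split cofibrations. Invoking \cref{theorem: weakly split K theory is the same} then produces the desired natural weak equivalence $K^S(w\cat N(X))\xrightarrow{\sim} K^W(\cat N(X))$, which is natural in $X$ because both $K^S$ and $K^W$ are functorial and $\cat N(f)$ is exact and wedge-preserving.

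The only step requiring care is verifying that $\cat N(f)$ really lands in $\cat N(Y)$ on morphisms, i.e.\ that the commuting square
\[
\begin{tikzcd}
    A_i \ar[r, "f_i"] \ar[d, swap, "f\circ r_A"] & T^p(B_{j_1},\dots,B_{j_p}) \ar[d, "T^p(f\circ r_B,\dots,f\circ r_B)"] \\
    Y & T^p(Y,\dots,Y) \ar[l, "\alpha_p^Y"]
\end{tikzcd}
\]
commutes. This follows from the original commuting square in $\cat N(X)$ by post-composing with $f$ and using the compatibility $f\circ \alpha_p^X = \alpha_p^Y\circ T^p(f,\dots,f)$; modulo this short diagram chase the corollary is otherwise a bookkeeping consequence of the results already established in the section.
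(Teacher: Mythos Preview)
Your proposal is correct and follows essentially the same approach as the paper: define $\cat N(f)$ by postcomposing the structure maps $A_i\to X$ with $f$ and leaving the combinatorial and continuous data of morphisms unchanged, then invoke \cref{theorem: weakly split K theory is the same} together with the observation that $\cat N(X)$ has weakly split cofibrations. You have supplied more detail than the paper does (in particular the explicit verification of the commuting square over $Y$ and of exactness of $\cat N(f)$), but the argument is the same.
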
\begin{proof}
    For all $\mathbb{E}_\infty$-spaces $X$, the category $\cat N(X)$ is a Waldhausen category and given $f\colon X\to Y$, we obtain an exact functor $\cat N(f)\colon \cat N(X)\to \cat N(Y)$ which is defined by sending $X\amalg \coprod_{i=1}^a A_i$ to $Y\amalg \coprod_{i=1}^a A_i$, where $A_i$ is a space over $Y$ via the composition $A_i\to X\xrightarrow{f} Y$. Since $f$ is a map of $\mathbb{E}_\infty$-spaces, the evident definition of $\cat N(f)$ on morphisms also makes sense and one checks that $\cat N(f)$ is exact and $\cat N(-)$ is functorial. The second claim follows from \cref{theorem: weakly split K theory is the same} and that the cofibrations in $\cat N(X)$ are splittable up to weak equivalence.
\end{proof}

We can now complete the proof of \cref{introthm:inverse K} as stated in the introduction. 

\begin{proof}[Proof of \cref{introthm:inverse K}]
Let $E$ be a connective spectrum and set $X= \Loop^{\infty} E$. By the corollary above, there is a natural weak equivalence $K^{S}(w\cat N(X))\xrightarrow{\sim} K^W(\cat N(X))$. On the other hand, the functor $s\colon \Null/X \to w\cat N(X)$ will induce an equivalence $Spt(Uw\cat N(X))\to Spt(\Null/X)$. By Thomason's theorem, we then have a natural zig-zag of weak equivalences\[
\Loop^{\infty}K^W(\cat N(X)) \xleftarrow{\sim} \Loop^{\infty} K^{S}(w\cat N(X)) \leftrightarrow Spt(Uw\cat N(X)) \xleftarrow{\sim} Spt(\Null/X) \leftrightarrow X. 
\] Therefore $E$ is isomorphic to $K^W(\cat N(\Loop^\infty E))$ in the stable homotopy category. 
\end{proof}

\printbibliography
\end{document}